\newtheorem{thm}{Theorem}
\newtheorem{lemma}{Lemma}
\newtheorem{prop}{Proposition}
\newcommand{\insertimage}[2]{\includegraphics[scale=#1]{#2}}
\def\Z{\Bbb Z}
\def\N{\Bbb N}
\DeclareMathOperator\id{id}
\DeclareMathOperator\Ho{H} \DeclareMathOperator\Sp{Sp}
\DeclareMathOperator\PSp{PSp} 
\DeclareMathOperator\image{Im} \DeclareMathOperator\MCG{MCG}
\DeclareMathOperator\prob{prob}
\newcommand{\Rmnum}[1]{\expandafter\@slowromancap\romannumeral #1@}
\begin{document}
\begin{abstract} We prove that the set of non-pseudo-Anosov elements
in the Torelli group is exponentially small. This answers a question
of Kowalski \cite{Ko}.
\end{abstract}

\title{Sieve methods in group theory \Rmnum{2}: \\ The Mapping Class Group}

\keywords{sieve methods; Pseudo-Anosov elements.}

\author{Alexander Lubotzky and Chen Meiri}
\address{Einstein Institute of Mathematics \\Hebrew University\\Jerusalem 90914, Israel}
\email{alexlub@math.huji.ac.il, chen.meiri@mail.huji.ac.il}
\maketitle
\date{\today}

\section{Introduction}

Let $S$ be an orientable closed surface of genus $g \ge 1$ and
denote its mapping class group by $\MCG(S)$. Thurston divided the
elements of the $\MCG(S)$ into three types: pseudo-Anosov (PA for
short), reducible and periodic (see Theorem \ref{NTC} below). He
conjectured that `most' of the elements of $\MCG(S)$ are PA. This
was proved by Maher \cite{Ma}. A stronger result was shown by Rivin
\cite{Ri} (and reproved by Kowalski \cite{Ko} in  a more conceptual
form). They proved that the set of non-PA elements is exponentially
small. Let us define this notion:

Let $\Gamma$ be a finitely generated group and fix a symmetric
generating set $\Sigma$ of $\Gamma$ which satisfies an odd relation.
Random walks on the Cayley graph $\textrm{Cay}(\Gamma,\Sigma)$ can
be used to `measure' subsets $Z\subseteq \Gamma$ by estimating the
probability, $\prob_{\Sigma,k}(Z)$, that the $k^{\text{th}}$-step of
a random walk belongs to $Z$ for larger and larger values of $k$'s.
We say that $Z$ is \emph{exponentially small with respect to
$\Sigma$} if there exist constants $c,\alpha>0$ such that
$\prob_{\Sigma,k}(Z) \le ce^{-\alpha k}$ for all $k \in \N$. The set
$Z$ is called \emph{exponentially small} if it is exponentially
small with respect to all symmetric generating sets $\Sigma$ which
satisfies an odd relation (so the Cayley graph is not bi-partite
graph).

Rivin's result states:
\begin{thm}\label{MCG thm}{$($\cite{Ri}, see also \cite{Ko}$)$}. The set of non-PA
elements of $\MCG(S)$ is exponentially small.
\end{thm}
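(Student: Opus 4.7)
The plan is to reduce the problem to a statement on $\Sp(2g,\Z)$ via the symplectic representation $\psi:\MCG(S)\to \Sp(2g,\Z)$ arising from the action on $H_1(S,\Z)$, and then apply the large sieve. The starting point is the Casson--Bleiler criterion (built on Thurston): any $\phi\in\MCG(S)$ whose characteristic polynomial $\chi_{\psi(\phi)}(t)\in\Z[t]$ is irreducible over $\Q$, is not of the form $f(t^k)$ for some $k\geq 2$, and has no root on the unit circle, is pseudo-Anosov. Consequently the set of non-PA elements is contained in $\psi^{-1}(E)$, where
\[
E=\{A\in\Sp(2g,\Z):\chi_A\text{ fails at least one of the three algebraic conditions above}\},
\]
and it is enough to prove that $\psi^{-1}(E)$ is exponentially small with respect to $\Sigma$.

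For each odd prime $p$, let $\pi_p:\Sp(2g,\Z)\to\Sp(2g,\mathbb{F}_p)$ be reduction mod $p$. A standard Chebotarev/Weyl-integration count on the algebraic group $\Sp_{2g}$, using conjugacy classes in the Weyl group of type $C_g$, shows that the proportion of $A\in\Sp(2g,\mathbb{F}_p)$ whose characteristic polynomial factors non-trivially over $\mathbb{F}_p$ is bounded away from $1$ by a constant independent of $p$. Thus whenever $\chi_A$ is reducible over $\Z$, $\pi_p(A)$ lies in a subset of constant codensity for \emph{every} prime $p$. The $f(t^k)$-condition is again a polynomial constraint on the coefficients, non-generic on $\Sp_{2g}$; and ``all roots on the unit circle'' forces, by Kronecker's theorem, $A$ to be virtually torsion, a condition one also detects with positive codensity modulo each $p$.

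With these mod-$p$ density estimates in hand, I would invoke the large sieve for finitely generated groups in Kowalski's formulation. The analytic input is a uniform (in $p$) spectral gap for the Cayley graphs $\mathrm{Cay}(\Sp(2g,\mathbb{F}_p),\pi_p(\psi(\Sigma)))$: for $g\geq 2$ this is delivered by Kazhdan's property $(T)$ of $\Sp(2g,\Z)$, and for $g=1$ by Selberg's theorem / property $(\tau)$ for congruence subgroups of $\SL_2(\Z)$, using the surjection $\MCG(S)\twoheadrightarrow \SL_2(\Z)$. The spectral gap yields exponentially fast equidistribution of the $k$-step random walk on $\Sp(2g,\mathbb{F}_p)$; feeding this into the sieve and summing over primes in a logarithmic range (say $p\leq e^{\beta k}$ for a small $\beta$) gives $\prob_{\Sigma,k}(\psi^{-1}(E))\leq Ce^{-\alpha k}$, which is the assertion.

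The main obstacle is the interplay between the two uniformities: (a) a strictly positive lower bound, independent of $p$, on the proportion of $\Sp(2g,\mathbb{F}_p)$-elements with ``generic'' characteristic polynomial, and (b) a $p$-independent spectral gap for the Cayley graphs $\mathrm{Cay}(\Sp(2g,\mathbb{F}_p),\pi_p(\psi(\Sigma)))$. Each ingredient is individually accessible from the theory of finite groups of Lie type and from property $(T)$ / $(\tau)$, but securing both simultaneously is what promotes the standard zero-density conclusion of a sieve into true exponential decay. Identifying $E$ precisely---so that being non-PA forces membership in $E$ while $E$ itself is detectable modulo primes---is the conceptual heart around which the analytic estimates organise.
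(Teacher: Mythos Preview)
Your proposal is correct and follows essentially the same route the paper sketches: reduce via the Casson--Bleiler criterion to the three algebraic conditions on the characteristic polynomial of the image in $\Sp(2g,\Z)$, treat conditions (b) and (c) as easy, and handle irreducibility by the large sieve with property $(T)$/$(\tau)$ supplying the spectral gap and a uniform mod-$p$ density bound supplying the local input. The only cosmetic differences are that the paper invokes Chavdarov's theorem directly for the mod-$p$ count rather than a Weyl-group argument, and states condition (b) as ``has a root of unity'' rather than your ``root on the unit circle'' (your Kronecker remark then needs the irreducibility from (a) to go through, but this is harmless).
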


The proof of this result uses the epimorphism
$\pi:\MCG(S)\rightarrow \Sp(2g,\Z)$ (which is induced by the action
of $\MCG(S)$ on the homology of $S$). It is shown in \cite{BC} that
if $\gamma \in \MCG(S)$ is not PA then the characteristic polynomial
$f(x)$ of $\pi(\gamma)$ satisfies one of the following
possibilities:
\begin{itemize}
\item[a.] $f(x)$ is reducible in $\mathbb{Q}[x]$.
\item[b.] $f(x)$ has a root which is a root of unity.
\item[c.] There is $d \ge 2$ and polynomial $g(x)$ such that $f(x)=g(x^d)$
\end{itemize}

Thus, it is enough to show that the set of elements of $\Sp(2g,\Z)$
which satisfy at least one of the above three conditions is
exponentially small. This is not a hard task for conditions ($b$)
and ($c$) so we only focus on condition $(a)$. The proof that the
set of elements which satisfy condition $(a)$ is exponentially small
uses the `large sieve method' (implicitly in \cite{Ri} and
explicitly in \cite{Ko}). For our purpose, this method can be
summarized in the following theorem  which follows from Theorem B of
\cite{LM}

\begin{thm}\label{sieve theorem1} Let $\Gamma$ be a finitely
generated group and let $\mathcal{P}$ be a set of all but finitely
many primes . Let $(N_p)_{p \in \mathcal{P}}$ be a series of finite
index normal subgroups of $\Gamma$. Assume that there is a constant
$d \in \N^+$ such that:
\begin{itemize}
\item[1.] $\Gamma$ has property-$\tau$ with respect to the series of
normal subgroup $(N_p \cap N_q)_{p,q\in \mathcal{P}}$.
\item[2.] $|\Gamma_p| \le p^d$ for every $p \in \mathcal{P}$ where $\Gamma_p:=\Gamma/N_p$.
\item[3.] The natural map $\Gamma_{p,q}\rightarrow \Gamma_p \times
\Gamma_q$ is an isomorphism   for every distinct $p,q\in
\mathcal{P}$ where $\Gamma_{p,q}:=\Gamma /(N_p \cap N_q$).
\end{itemize} Then a subset $Z\subseteq \Gamma$ is exponentially small if there is $c>0$ such that:
\begin{itemize}
\item[4.] $\frac{|Z_p|}{|\Gamma_p|}\le 1-c$ for every $p \in \mathcal{P}$ where $Z_p:=ZN_p/N_p$.
\end{itemize}
\end{thm}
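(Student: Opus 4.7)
The plan is to deduce Theorem \ref{sieve theorem1} from Theorem B of \cite{LM}, an abstract large sieve inequality for random walks on finitely generated groups. Conditions 1--3 are precisely the structural hypotheses that theorem demands---uniform spectral gap across pairwise products, polynomial quotient size, and independence of primes---while condition 4 supplies the density deficit that drives the sieve.

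First I would fix a symmetric generating set $\Sigma$ of $\Gamma$ satisfying an odd relation (so that the Cayley graph is non-bipartite and the random walk converges to uniform without parity obstructions), and analyze the random walk through its projections to the quotients $\Gamma_{p,q}$. By property-$\tau$ the one-step averaging operator on $\ell^2(\Gamma_{p,q})$ has a uniform spectral gap $1 - \lambda_0 > 0$, so the $k$-step distribution equidistributes on $\Gamma_{p,q}$ with error $O(\lambda_0^k)$. The isomorphism $\Gamma_{p,q} \cong \Gamma_p \times \Gamma_q$ from condition 3 lets contributions from distinct primes combine additively in the sieve rather than with spurious correlations, while condition 2 bounds $|\Gamma_{p,q}| \leq (pq)^d$, keeping the error polynomial in the cutoff eventually placed on primes.

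Feeding these inputs into the abstract machinery of \cite{LM} produces an inequality of the shape
$$\prob_{\Sigma,k}(Z) \;\leq\; \frac{1 + O(x^{2d}\lambda_0^k)}{H(x)},$$
where $H(x) = \sum_{p \in \mathcal{P},\, p \leq x} (1 - \delta_p)/\delta_p$ and $\delta_p := |Z_p|/|\Gamma_p|$. Condition 4 forces $1 - \delta_p \geq c$ uniformly, so every summand of $H(x)$ is $\geq c$ and by the prime number theorem $H(x) \gg \pi(x)$.

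The main technical step, and the principal obstacle, is the calibration $x = x(k)$. Setting $x := e^{\beta k}$ with $\beta > 0$ small enough that $2d\beta < -\log \lambda_0$ makes the error term $O(1)$, while $\pi(e^{\beta k}) \gg e^{\beta k}/k$; substituting back yields
$$\prob_{\Sigma,k}(Z) \;\ll\; k \, e^{-\beta k},$$
which is exponentially small with rate depending only on $c$, $d$, $\lambda_0$ and $|\Sigma|$. The actual work is in checking that the abstract sieve of \cite{LM} does admit an error term of the stated polynomial-in-$x$, exponential-in-$k$ form under exactly hypotheses 1--3; once that extraction is in hand, the calibration above and the remaining algebra are mechanical.
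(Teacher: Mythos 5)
The paper does not supply its own proof of this theorem; it is stated as a direct consequence of Theorem B of \cite{LM}, and your sketch reproduces the standard large-sieve derivation that underlies that citation: conditions 1--3 furnish the spectral-gap, polynomial-size, and independence inputs, condition 4 supplies the uniform density deficit so that $H(x) \gg \pi(x)$, and the calibration $x = e^{\beta k}$ with $2d\beta < -\log\lambda_0$ turns the sieve bound $\prob_{\Sigma,k}(Z) \le \Delta(k,x)/H(x)$ into exponential decay. Your outline is correct and is in fact more explicit than the paper's one-line citation; the only step left to the reference (as you acknowledge) is the precise polynomial-in-$x$, geometric-in-$k$ form of the error term $\Delta(k,x)$ in \cite{LM}'s inequality, which is likewise the step the paper delegates to \cite{LM}.
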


Theorem \ref{sieve theorem1} can be used to show that the set  $Z
\subseteq \Sp(2g,\Z)$ consists of matrices with reducible
characteristic polynomials is exponentially small. Indeed, the
symplectic group $\Sp(2g,\Z)$ has property-$\tau$ with respect to
the family $\{N_q \mid q \text{ is a square free positive
integer}\}$ where $N_q$ is the kernel of the modulo-$q$ homomorphism
$\Sp(2g,\Z)\rightarrow \PSp(2g,\Z/qZ))$ (in fact, the group
$\Sp(2g,\Z)$ even has property (T) for $g \ge 2$). Condition 2 holds
for $d=4g^2$ and condition 3 is well known. Condition 4 is due to
Chavdarov \cite{Ch} who proved that there is a constant $c\in(0,1)$
such that for every prime $p$ the proportion of the elements in
$\Sp(2g,\mathbb{Z}/p\mathbb{Z})$ with reducible characteristic
polynomial is at most $c$.

The proof sketched above gives no information on the Torelli
subgroup $\mathcal{T}(S):=\ker \pi$. The group $\mathcal{T}(S)$ is
trivial for $g=1$ and infinitely generated for $g=2$. On the other
hand, $\mathcal{T}(S)$ is finitely generated group for $g \ge 3$ so
from now on we assume $g \ge 3$. In the early days it was not even
known if $\mathcal{T}(S)$ contains PA elements. However, it does and
Maher result even showed that `most' of the elements in a ball of
radius $n$ are $PA$. Kowalski [Ko, page 135] asked if the stronger
result is valid also for $\mathcal{T}(S)$. In this paper we prove
that this is indeed the case:
\begin{thm}\label{torelli thm} The set of non-PA
elements of $\mathcal{T}(S)$ is exponentially small.
\end{thm}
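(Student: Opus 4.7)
Since $\mathcal{T}(S)=\ker\pi$ lies in the kernel of the symplectic representation used for $\MCG(S)$, the core task is to produce a non-trivial linear representation of (a finite-index subgroup of) $\mathcal{T}(S)$ to which the Birman--Casson trichotomy and Theorem \ref{sieve theorem1} can be applied. The natural candidate is the action on $H_1(\widetilde{S},\Z)$ for an appropriate finite characteristic cover $\widetilde{S}\to S$: although Torelli acts trivially on $H_1(S)$, it need not act trivially on $H_1(\widetilde{S})$. The plan is to fix a characteristic subgroup $\pi_1(\widetilde S)\le\pi_1(S)$ of finite index, let $\MCG(S)[\widetilde S]\le\MCG(S)$ be the (finite-index) subgroup of mapping classes that lift to $\widetilde S$, set $\Gamma=\mathcal{T}(S)\cap\MCG(S)[\widetilde S]$, and work with the resulting symplectic representation $\rho:\Gamma\to\Sp(2\tilde g,\Z)$, where $\tilde g$ is the genus of $\widetilde S$. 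Exponential smallness passes between $\Gamma$ and $\mathcal{T}(S)$ (since they are commensurable), so it suffices to prove the statement for $\Gamma$.

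Next, if $\gamma\in\Gamma$ is not PA on $S$ then its lift to $\widetilde S$ is also not PA, because a periodic or reducible mapping class on $S$ lifts to a periodic or reducible one on $\widetilde S$. Applying the Birman--Casson trichotomy (a)--(c) from the excerpt to $\rho(\gamma)\in\Sp(2\tilde g,\Z)$ therefore reduces the theorem to showing that the preimage under $\rho$ of the bad set of symplectic matrices --- those with reducible characteristic polynomial, with a root of unity among their eigenvalues, or with characteristic polynomial of the form $g(x^d)$ --- is exponentially small in $\Gamma$.

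To apply Theorem \ref{sieve theorem1} to $\Gamma$, I would take $N_p$ to be the preimage under $\rho$ of the principal congruence subgroup mod $p$ inside $\rho(\Gamma)$, for $p$ running over a cofinite set $\mathcal{P}$ of primes. Hypothesis 2 is automatic with $d=4\tilde g^2$ since $\Gamma/N_p$ embeds in $\Sp(2\tilde g,\mathbb{F}_p)$. Hypotheses 1 and 3 amount, respectively, to super-approximation and strong approximation for the image $\rho(\Gamma)$ in its Zariski closure $G\le\Sp(2\tilde g)$. Hypothesis 4 follows, for the easy conditions (b) and (c), from direct counting in $G(\mathbb{F}_p)$, and for condition (a) from a Chavdarov-type theorem bounding the proportion of elements of $G(\mathbb{F}_p)$ with reducible characteristic polynomial away from $1$ uniformly in $p$.

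The main obstacle is that $\rho(\Gamma)$ is far from all of $\Sp(2\tilde g,\Z)$: it preserves the deck-group action on $\widetilde S$ and acts trivially on the pull-back of $H_1(S,\Z)$, so it lies inside a proper algebraic subgroup $G\le\Sp(2\tilde g)$, typically a product of symplectic and unitary factors over subrings of cyclotomic fields. Identifying this $G$, proving that $\rho(\Gamma)$ is Zariski dense in $G$, and establishing the required super-approximation together with Chavdarov-type density on $G(\mathbb{F}_p)$, is the heart of the argument. Zariski density of the Torelli image on the homology of characteristic covers is the deepest ingredient and is in the spirit of work of Looijenga and Putman--Wieland; once it is in place, super-approximation for $\rho(\Gamma)$ and Chavdarov's density theorem for $G(\mathbb{F}_p)$ are known in sufficient generality to close the argument via Theorem \ref{sieve theorem1}.
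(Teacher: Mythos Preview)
Your instinct---use homology of finite covers---matches the paper, but there is a genuine gap at the step where you invoke the Birman--Casson trichotomy on $\rho(\gamma)\in\Sp(2\tilde g,\Z)$. You yourself note that $\rho(\Gamma)$ acts trivially on the pull-back of $H_1(S,\Z)$ inside $H_1(\widetilde S,\Z)$. Concretely, the lift $\tilde\gamma$ preserves $\mathcal{H}=\ker\bigl(H_1(\widetilde S,\Z)\to H_1(S,\Z)\bigr)$ and acts as the identity on the quotient $H_1(\widetilde S,\Z)/\mathcal{H}\cong H_1(S,\Z)$, so the characteristic polynomial of $\rho(\gamma)$ is divisible by $(x-1)^{2g}$ for \emph{every} $\gamma\in\Gamma$. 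Condition~(a) therefore holds for all of $\Gamma$, the ``bad set'' is everything, and your reduction gives no information. The Birman--Casson criterion on the full $H_1(\widetilde S)$ is vacuous for Torelli elements; this is exactly why the Torelli case is harder than the $\MCG(S)$ case, and why passing to the Zariski closure $G$ and asking for a Chavdarov bound on $G(\mathbb{F}_p)$ does not help---the bad set you have written down is not a proper subvariety of $G$.

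The paper fixes this in two ways that your plan does not anticipate. First, it works not with $H_1(\widetilde S)$ but with the Prym piece $\mathcal{H}\cong\Z^{2g-2}$, and only for \emph{double} covers of $S$; on each such $\mathcal{H}$ it shows by explicit Dehn twists and Tits' theorem that the image of $\mathcal{T}(S)$ has \emph{finite index} in $\PSp(2g-2,\Z)$. Thus no identification of a Zariski closure, no super-approximation, and no generalized Chavdarov argument is needed---the classical congruence-subgroup inputs for $\Sp(2g-2,\Z)$ suffice. Second, and more importantly, the paper does not use Birman--Casson at all on the cover. Instead it argues geometrically: a non-PA element of $\mathcal{T}(S)$ fixes a simple closed curve $c$ (Ivanov); up to a homeomorphism of $S$, $c$ is one of the standard curves $a_1$ or $c_k$; and for the double cover adapted to that homeomorphism the lifts of $c$ visibly span a proper nontrivial subspace of $\mathcal{H}$, forcing the Prym characteristic polynomial to be reducible. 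Different curves require different double covers, which is why the paper uses the whole finite family $\rho_1,\ldots,\rho_{2^{2g}-1}$ of Prym representations and takes a union of their bad sets, rather than a single characteristic cover as in your plan.
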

Our proof also uses the sieve method but instead of $\pi$ we look at
the actions of $\mathcal{T}(S)$ on the homologies of the $2^{2g}-1$
2-sheeted covers of $S$ (called prym representations in \cite{Loo},
see also \cite{GL} and \cite{GL2}). These covers give $2^{2g}-1$
different homomorphisms $\pi_1,\ldots,\pi_{2^{2g}-1}$ of
$\mathcal{T}(S)$ into $\Sp(2g-2,\Z)$. We show that if $\gamma \in
\mathcal{T}(S)$ is not PA then there is $1 \le i \le 2^{2g}-1$ such
that the characteristic polynomial of $\pi_i(\gamma)$  is reducible.
We also show (following \cite{Loo} and \cite{GL2}) that
$\pi_i(\mathcal{T}(S))$ is a finite index subgroup of $\Sp(2g-2,\Z)$
and hence also has property-$\tau$ with respect to congruence
subgroups. We can therefore use the sieve method in a similar manner
to the use for the mapping class group case.

After this work was announced in \cite{Lu2} and a  draft was written
we learnt that Justin Malestein and Juan Souto also proved Theorem
\ref{main thm} \cite{MS}. The main idea is similar in both proofs,
but some details are done differently.

\section{The Torelli group.}

\subsection{The mapping class group and the Torelli subgroup}

Let $S$ be an orientable connected compact surface of genus $g \ge
2$, $S$ is homeomorphic to a connected sum of g tori. For $1\le k\le
g-1$, let $c_k$ be the curve separating the left $k$ tori for the
right $g-k$ ones, as shown in Figure \ref{A torus of genus $g$.}.
The first homology group $\Ho_1(S,\mathbb{Z})$  is isomorphic to
$\Z^{2g}$. If $c$ is a closed curve of $S$ then $\bar{c}$ denotes
the homology class of it. The elements $\bar{a}_1,\bar{b}_1,\ldots
,\bar{a}_g,\bar{b}_g$ form a basis of $\Ho_1(S,\mathbb{Z})$ where
$a_1,b_1,\ldots,a_g,b_g$ are the curves shown in Figure \ref{A torus
of genus $g$.}.

If $d_1,d_2$ are simple closed curves on $S$ with finite
intersection then  their \emph{intersection number} $i({d}_1,{d}_2)$
is the sum of the indices of the intersection points of $d_1$ and
$d_2$, where an intersection point is of index $+1$ when the
orientation of the intersection agrees with the orientation of S,
and $-1$ otherwise. The intersection number induces a symplectic
form $(\cdot,\cdot)_S$ on $\Ho_1(S,\mathbb{Z})$ in the following
way: Given two homology classes we choose representatives $d_1,d_2$
with finite intersection and define
$(\bar{d}_1,\bar{d}_2)_S:=i({d}_1,{d}_2)$. In particular,
$(\bar{a}_i,\bar{b}_i)_S=\delta_{ij}$, $(\bar{a}_i,\bar{a}_j)_S=0$
and $(\bar{b}_i,\bar{b}_j)_S=0$ for all the integers $1 \le i,j\le
g$, i.e. $\bar{a}_1,\bar{b}_1,\ldots
,\bar{a}_g,\bar{b}_g$ is a sympletic basis. 

The \emph{mapping class group} $\MCG(S)$ of $S$ is the group of
isotopy classes of orientation-preserving homeomorphisms of $S$. We
will denote an isotopy classes with representative $\psi$ by
$[\psi]$. The group $\MCG(S)$ acts on $\Ho_1(S,\mathbb{Z})$ and
preserves its symplectic form. Hence, this action induces a
homomorphism
$$\gamma:\MCG(S)\rightarrow \Sp (\Ho_1(S,\mathbb{Z})).$$
$\mathcal{T}(S):=\ker (\gamma)$ is called the \emph{Torelli group}
of $S$.

While the action of $\MCG(S)$ on $\Ho_1(S,\mathbb{Z})$ can be used
to investigate  its elements, it does not tell us much about the
Torelli group elements. To overcome this problem we will investigate
the action of the Torelli group on the homology of the double covers
of $S$.
\begin{figure}[ht]
\begin{center}
\insertimage{.7}{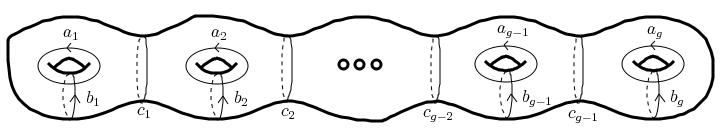} \caption{A surface of genus $g$.}
\label{A torus of genus $g$.}
\end{center}
\end{figure}

\subsection{Pseudo-Anosov elements}


We start this section with the Nielsen-Thurston classification of
the elements of the mapping class group . An element $\Psi \in
\Gamma_g$ is called  \emph{pseudo-Anosov} (PA for short) if there is
a pair of transverse measured foliations $(F^u, \mu_u)$ and $(F^s,
\mu_s)$ on $S$, a number $\lambda > 1$, and a $\psi\in \Psi$ so that
$\psi \cdot(F^u, \mu_u)=(F^u, \lambda\mu_u)$ and $\psi\cdot(F^s,
\mu_s)=(F^s, \lambda^{-1}\mu_s)$ (see \cite{a primer}). For our
purpose, we do not need the details of this definition and we can
use the following theorem:
\begin{thm}[see \cite{a primer}, Theorem 12.1]\label{NTC} An element
$\Psi \in \MCG (S)$ is pseudo-Anosov if
none of the following two possibilities holds:
\begin{enumerate}
\item[1.] $\Psi^n=[\id]$ for some $n\in\mathbb{N}^+$.
\item[2.] There is a finite non empty disjoint collection of non homotopic non trivial circles $C_i$
in $S$ and a $\psi\in\Psi$ such that $\psi(\mathcal{C}) =
\mathcal{C}$, where $\mathcal{C}$ is the union of the circles $C_i$
($C_i$ is non trivial if is not contractible to a point).
\end{enumerate}
We call $\Psi$ periodic in case $(1)$ and reducible in case $(2)$.
\end{thm}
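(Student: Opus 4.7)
The plan is to follow Thurston's original proof via his compactification of Teichmüller space. Write $\mathrm{Teich}(S)$ for the Teichmüller space of marked hyperbolic structures on $S$; it is diffeomorphic to $\mathbb{R}^{6g-6}$, and the mapping class group acts on it by pullback of structures. The first main step is to invoke Thurston's compactification $\overline{\mathrm{Teich}(S)} = \mathrm{Teich}(S) \cup \mathcal{PMF}(S)$, where $\mathcal{PMF}(S)$ is the space of projective measured foliations; the compactification is homeomorphic to a closed ball of dimension $6g-6$, and the $\MCG(S)$-action extends continuously to the boundary. Given $\Psi \in \MCG(S)$, the Brouwer fixed-point theorem applied to this closed ball produces a fixed point $x$ of $\Psi$, and the rest of the proof is a case analysis on $x$.

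If $x \in \mathrm{Teich}(S)$, then $\Psi$ preserves a hyperbolic metric, so a representative $\psi$ is an isometry of the corresponding hyperbolic surface; since the isometry group of a closed hyperbolic surface is finite, $\Psi^n=[\id]$ for some $n$, so $\Psi$ is periodic. If $x \in \mathcal{PMF}(S)$, then $\Psi$ fixes a projective class of measured foliation, so there exists $(F,\mu)$ and $\lambda>0$ with $\Psi\cdot(F,\mu)=(F,\lambda\mu)$. Split according to the structure of $F$: if $F$ is not arational (that is, it contains a closed leaf or has a reducing simple closed curve arising from its saddle connections and singular graph), then the boundary components of a suitable $\Psi$-invariant regular neighborhood give a non-empty disjoint collection of non-homotopic essential circles preserved by $\psi$, so $\Psi$ is reducible.

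It remains to show that if $F$ is arational (every leaf is dense, no reducing curves), then $\Psi$ is pseudo-Anosov. The idea is to produce a second fixed projective foliation $[F^s,\mu_s]$ by applying Brouwer a second time to the $\Psi$-action on a complementary subset of the boundary, or equivalently by noting that the topological dynamics of $\Psi$ on $\mathcal{PMF}(S)$ forces a second fixed point when $\lambda\neq 1$ (and $\lambda=1$ is ruled out because an arational foliation fixed with scale $1$ would give an elliptic-type invariant in Teichmüller space contradicting non-periodicity). A key measure-theoretic step, using that the intersection number pairing $i(\cdot,\cdot)$ extends continuously to $\mathcal{MF}(S)\times \mathcal{MF}(S)$ and that $i([F],[F])=0$ for any measured foliation, shows that the two fixed foliations $F^u=F$ and $F^s$ are transverse and that their scaling constants are reciprocal; after possibly swapping $u$ and $s$ we may take $\lambda>1$, yielding precisely the data required by the definition of pseudo-Anosov.

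The main obstacles are constructing Thurston's compactification and proving it is a closed ball on which the action extends continuously; these are nontrivial and are the core content of Thurston's FLP-style theory. The next most delicate point is the transversality and contraction step in the arational case: one must rule out $\lambda=1$, and one must upgrade an abstract second fixed foliation to a foliation topologically transverse to $F^u$, which requires the continuity of geometric intersection on $\mathcal{MF}(S)$ together with a careful analysis of the structure of minimal foliations on $S$.
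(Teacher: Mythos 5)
This theorem is quoted in the paper from Farb--Margalit (Theorem 12.1 of \cite{a primer}); the paper gives no proof of it, so there is nothing internal to compare against. Your sketch is the standard Thurston argument (the one carried out in FLP and in Chapter 13 of the Primer): compactify $\mathrm{Teich}(S)$ by $\mathcal{PMF}(S)$ to a closed ball, apply Brouwer, and run the trichotomy on the fixed point (interior $\Rightarrow$ finite order via finiteness of hyperbolic isometry groups; non-arational boundary foliation $\Rightarrow$ the boundary of its minimal components gives an invariant multicurve, hence reducible; arational $\Rightarrow$ pseudo-Anosov). As an outline this is correct and is the right proof to give.

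Two steps are misdescribed as written. First, the second fixed foliation $F^s$ is not obtained by ``applying Brouwer a second time to a complementary subset of the boundary'' --- Brouwer needs a closed ball, and no complementary subset of $\mathcal{PMF}(S)$ is one; nor can you invoke North--South dynamics, which is a \emph{consequence} of being pseudo-Anosov, not an input. The correct move is to apply Brouwer to the action of $\Psi^{-1}$ on the whole closed ball, rule out the interior and non-arational cases exactly as for $\Psi$, and then use continuity of $i(\cdot,\cdot)$ together with the lemma that two arational foliations with zero intersection number are projectively equal to show $[F^s]\neq[F^u]$, that they jointly fill, and that the stretch factors are reciprocal. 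Second, ruling out $\lambda=1$ in the arational case is a genuine lemma (a homeomorphism fixing an arational measured foliation with stretch $1$ is isotopic to a finite-order map); your parenthetical about an ``elliptic-type invariant'' gestures at this but is not an argument. With those two points repaired, and granting the construction of the compactification itself, the sketch assembles into the standard proof.
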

It is known that the  Torelli group is torsion free. Hence, in order
to show that an elements $\Psi\in \mathcal{T}(S)$ is pseudo-Anosov
we will have to show that it is not reducible. In fact, we will use
a result of Ivanov ( \cite{Subgroups of Teichm¨uller}, Corollary
$1.8$) which implies:
\begin{prop} Let $\Psi\in \mathcal{T}(S)$. If $\Psi$ is not pseudo-Anosov then there
is a non-trivial circle $C$ and $\psi \in \Psi$ such that
$\Psi(C)=C$.
\end{prop}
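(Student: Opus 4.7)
The plan is to reduce the statement to Ivanov's Corollary 1.8 via two standard steps. First, because $\mathcal{T}(S)$ is torsion-free, the periodic case of Theorem \ref{NTC} is vacuous for $\Psi \in \mathcal{T}(S)$. Hence a non-pseudo-Anosov Torelli element is necessarily reducible, so there exists a finite non-empty disjoint collection $\{C_1,\ldots,C_n\}$ of non-homotopic non-trivial simple closed curves and a representative $\psi\in\Psi$ such that $\psi(\mathcal{C})=\mathcal{C}$ with $\mathcal{C}=\bigcup_i C_i$.

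The statement we want is stronger than reducibility: we need a single isotopy class $[C]$ that is fixed, not merely a collection that is preserved as a set. A priori $\psi$ may permute the components $C_i$ non-trivially. The plan is to pass to a representative of $\Psi$ (not of a proper power of $\Psi$) that fixes each component individually. This is exactly the content of Ivanov's result on pure subgroups of the mapping class group: the Torelli group $\mathcal{T}(S)$ is pure, meaning that any reducible element admits a representative preserving each component of its canonical reduction system. Intuitively, a non-trivial permutation of the $C_i$ would force $\Psi$ to act non-trivially on the subspace of $\Ho_1(S,\mathbb{Z})$ spanned by (or associated to) these curves and their complementary subsurfaces, contradicting that $\Psi$ acts trivially on $\Ho_1(S,\mathbb{Z})$.

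Granting this purity statement (Corollary 1.8 of \cite{Subgroups of Teichm�uller}), we may choose $\psi\in\Psi$ so that $\psi(C_i)$ is isotopic to $C_i$ for every $i$. Taking $C:=C_1$ and, if necessary, adjusting $\psi$ within its isotopy class so that it fixes $C_1$ pointwise as a subset, we obtain a non-trivial circle $C$ and a representative $\psi\in\Psi$ with $\psi(C)=C$, as required.

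The main obstacle is the permutation issue in the second paragraph: without purity, one only obtains a power of $\Psi$ (or a power of $\psi$) fixing each component, which is not enough because we need the fixing representative to lie in $\Psi$ itself. Ivanov's theorem is precisely the tool that removes this obstacle for Torelli elements, and the rest is bookkeeping.
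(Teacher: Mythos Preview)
Your proposal is correct and follows the same approach as the paper: both use that $\mathcal{T}(S)$ is torsion-free to rule out the periodic case, and then invoke Ivanov's Corollary~1.8 to upgrade reducibility (a permuted reduction system) to an individually fixed circle. The paper in fact gives no further argument beyond citing Ivanov, so your write-up actually supplies more explanation than the original --- in particular, your identification of the permutation issue as the real obstacle, and of purity of $\mathcal{T}(S)$ as the content of Ivanov's result that resolves it, is exactly right.
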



A circle $C$ on $S$ is \emph{non-separating} if $S \setminus C$ is
connected. If $C$ and $C'$ are non separating circles then there is
a homeomorphism which sends $C$ to $C'$. Hence, if $C$ is a
non-separating circle then there is a homeomorphism which sends $C$
to $a_1$.

A circle $C$ of $S$ is \emph{separating} if $S \setminus C$ is not
connected. If $C$ is a non trivial separating circle then it follows
from the classification theorem of compact surfaces that there is $1
\le k \le g-2$ and a homeomorphism which sends $C$ to the curve
$c_k$ shown in Figure 1 (in fact, by symmetry we can assume that $k
\le \frac{g-1}{2}$).

Summing all this we get the following:
\begin{lemma}\label{simple criterion} Let $\Psi \in \mathcal{T}(S)$. If $\Psi $ is not
psuedo-Anosov then there is $\psi\in\Psi$ and an orientation
preserving homeomorphism $\nu$ of $S$ such that
one of the following holds:
\begin{enumerate}
\item[1.] $\psi(\nu(a_1)) = \nu(a_1)$ or $\psi(\nu(a_1)) = {\nu(a_1)}^{-1}$.
\item[2.] $\psi(\nu(c_k)) = {\nu(c_k)}$ or $\psi({\nu(c_k)}) ={\nu(c_k)}^{-1}$  for some $1\le k\le
g-2$.
\end{enumerate}
\end{lemma}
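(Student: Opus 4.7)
The plan is to combine the preceding Proposition, which produces for any non-pseudo-Anosov element of $\mathcal{T}(S)$ a non-trivial simple closed curve preserved by some representative, with the standard change-of-coordinates principle for surfaces, which identifies any simple closed curve of a given topological type with a fixed model curve ($a_1$ in the non-separating case, $c_k$ in the separating case).

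First I would invoke the Proposition to obtain a non-trivial simple closed curve $C$ on $S$ together with a representative $\psi\in\Psi$ such that $\psi(C)=C$ as a subset of $S$. I would then split into two cases according to the topological type of $C$. If $C$ is non-separating, the classification of surfaces tells us that all non-separating simple closed curves are equivalent under the action of the orientation-preserving self-homeomorphism group of $S$, so there is an orientation-preserving homeomorphism $\nu$ with $\nu(a_1)=C$. If $C$ is separating, it decomposes $S$ into two subsurfaces of genera $k$ and $g-k$ for some $1\le k\le g-1$; since the two complementary subsurfaces can be interchanged, we may relabel so that $k\le g-k$, and because $g\ge 3$ this forces $1\le k\le g-2$. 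Again by the change-of-coordinates principle there is an orientation-preserving homeomorphism $\nu$ with $\nu(c_k)=C$.

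In either case $\psi(\nu(d))=\psi(C)=C=\nu(d)$ as subsets of $S$, where $d$ denotes $a_1$ or $c_k$. Since $\psi|_C$ is a self-homeomorphism of a circle it is either orientation-preserving or orientation-reversing on $C$, which translates into $\psi(\nu(d))=\nu(d)$ or $\psi(\nu(d))=\nu(d)^{-1}$ at the level of oriented curves, giving exactly the two options in the lemma. The main subtle point is verifying that the coordinate change $\nu$ can always be chosen \emph{orientation-preserving} rather than merely a homeomorphism; this is a standard consequence of the change-of-coordinates principle, and in the separating case only requires an orientation-preserving rotation of the model surface sending $c_{g-k}$ to $c_k$ when the two complementary genera need to be relabelled.
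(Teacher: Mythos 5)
Your proof follows essentially the same route as the paper's: the paper presents the same argument informally in the two paragraphs immediately preceding the lemma (invoke Ivanov's result to get an invariant circle $C$, then use the classification of simple closed curves by topological type to produce $\nu$ with $\nu(a_1)=C$ or $\nu(c_k)=C$ for some $1\le k\le g-2$). Your treatment is if anything slightly more careful, since you explicitly verify that $\nu$ may be taken orientation-preserving and that the two alternatives in each case come from $\psi|_C$ being orientation-preserving or orientation-reversing on the circle, points the paper leaves implicit.
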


\subsection{The action of a psuedo-Anosov element on the homology of
a double cover.}

Let $M$ be the bordered surface obtained from $S$ after cutting
along the curve $b_g$ (Figure 2). So, $S$ is a quotient space of $M$
under the identification of $\dot{b}_g$ and $\hat{b}_g$.  We take
two copies  $M^\diamond$,$M^*$ of $M$  and glue them together by
identifying $\dot{b}_g^*$ with $\hat{b}_g^\diamond$  and
$\hat{b}_g^*$ with $\dot{b}_g^\diamond$ (Figure 3). The resulting
surfaces $\tilde{S}$ together with the natural  map
$\varphi:\tilde{S}\rightarrow S$  is a double cover of $S$ of genus
$2g-1$.
\begin{figure}[ht]
\begin{center}
\insertimage{.7}{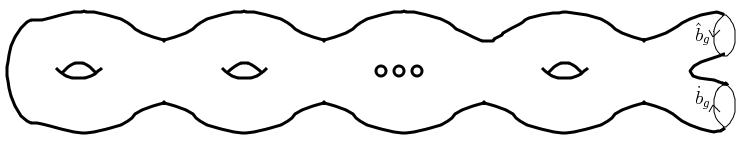} \caption{The surface after cutting along
$b_g$.} \label{Fig2} \insertimage{.7}{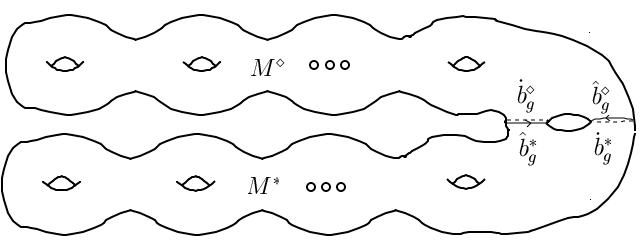} \caption{The double
cover.} \label{Fig3}
\end{center}
\end{figure}


Let $\mathcal{H}$ be the kernel of the homomorphism
$\Ho_1(\tilde{S},\mathbb{Z})\rightarrow \Ho_1({S},\mathbb{Z})$
induced by $\varphi$. The group $\mathcal{H}$ is isomorphic to
$\mathbb{Z}^{2g-2}$ with $z_1,z_{-1},\ldots,z_{g-1},z_{1-g}$ as a
free basis where $z_i:=\bar{a}_i^*-{\bar{a}_i^\diamond}$ and
$z_{-i}:=\bar{b}_i^*-{\bar{b}_i^\diamond}$ for $1\le i \le g-1$.

The symplectic form $(\cdot,\cdot)_{\tilde{S}}$ of
$\Ho_1(\tilde{S},\mathbb{Z})$ induces a related symplectic form
$(\cdot,\cdot)_{\mathcal{H}}:=\frac{1}{2}(\cdot,\cdot)_{\tilde{S}}$
on $\mathcal{H}$. The form $(\cdot,\cdot)_{\mathcal{H}}$ satisfies:
$$(z_i,z_j)_{\mathcal{H}}:= \left\{
\begin{array}{ccc}
1  & \textrm{if}& i=-j \textrm{ and } 0<i \\
-1  & \textrm{if}&i=-j\textrm{ and } i<0 \\
0 & \textrm{otherwise}&
\end{array} \right.$$

Note that the identity map on $S$ has two lifts to $\tilde{S}$. One
is the identity and the other one is $\xi$ which for every $q \in S$
substitutes the two lifts of $q$.

Choose a point $p\in S$ and a lift $\tilde{p}$ of it to $\tilde{S}$.
For every $\Psi \in \mathcal{T}(S)$ we choose a representative
$\psi\in\Psi$ such that $\psi(p)=p$. Since $(\tilde{S},\varphi)$ is
an abelian cover of $S$ and $\mathcal{T}(S)$ acts trivially on
$\Ho_1(S,\mathbb{Z})$ we can lift $\psi$ to $\tilde{S}$ in two ways,
one which fixes $\tilde{p}$ and one which switches $\tilde{p}$ with
the other lift of $p$. Let $\tilde{\psi}$ be the first lift (the
second one is $\xi \circ \tilde{\psi}$). This defines a map
$\lambda: \mathcal{T}(S) \rightarrow \MCG(\tilde{S})$ which sends
every $\Psi \in \mathcal{T}(S)$ to the isotopic class of
$\tilde{\psi}$. If $\Psi \in\mathcal{T}(S)$ then $\lambda(\Psi)$
acts on $\Ho_1(\tilde{S},\mathbb{Z})$ and leaves $\mathcal{H}$
invariant. It also preserves $(\cdot,\cdot)_{\tilde{S}}$ and
$(\cdot,\cdot)_{\mathcal{H}}$. This define a map from
$\mathcal{T}(S)$ to $\Sp(\mathcal{H})$. However, this map may depend
on the choice of representative $\psi$ of $\Psi$ and may fail to be
a homomorphism. But let us now compose $\lambda$ with $\pi$, the
homomorphism from isotopy classes of $\MCG(\tilde{S})$ which leaves
$\mathcal{H}$ invariant and preserves $(\cdot,\cdot)_{\mathcal{H}}$
into $\PSp(\mathcal{H})$. We define:
$$\rho:=\pi\circ\lambda:\mathcal{T}(S)\rightarrow
\PSp(\mathcal{H}).$$
\begin{prop} The map $\rho$ is a homomorphism.
\end{prop}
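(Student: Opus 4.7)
The plan is to show that $\rho$ is well-defined and multiplicative by exploiting the fact that any ambiguity in the construction of $\lambda$ is always captured by post-composition with the deck involution $\xi$, which becomes trivial after passing to $\PSp(\mathcal{H})$. The first key observation is that $\xi$ acts on $\mathcal{H}$ as $-\id$: since $\xi$ swaps the two sheets $M^*$ and $M^\diamond$, it interchanges $\bar{a}_i^*$ with $\bar{a}_i^\diamond$ and $\bar{b}_i^*$ with $\bar{b}_i^\diamond$, so $\xi_*(z_{\pm i}) = -z_{\pm i}$ for all $1 \le i \le g-1$. Because $-\id$ is central in $\Sp(\mathcal{H})$ and lies in the kernel of $\pi\colon\Sp(\mathcal{H})\to\PSp(\mathcal{H})$, any discrepancy between choices of lifts that amounts to composition with $\xi$ disappears after applying $\pi$.

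Next I would verify well-definedness of $\rho$. Suppose $\psi$ and $\psi'$ are two representatives of the same class $\Psi\in\mathcal{T}(S)$, each fixing $p$, and let $\tilde{\psi},\tilde{\psi}'$ be the lifts fixing $\tilde p$. Lift a free isotopy from $\psi$ to $\psi'$ starting at $\tilde{\psi}$: the endpoint is some lift of $\psi'$, which must be either $\tilde{\psi}'$ or $\xi\circ\tilde{\psi}'$, according as the trace of $p$ under the isotopy lifts to a loop at $\tilde p$ or to an arc joining the two preimages of $p$. In either case $\tilde{\psi}$ and $\tilde{\psi}'$ induce the same element of $\PSp(\mathcal{H})$, so $\rho(\Psi)$ does not depend on the chosen representative.

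For multiplicativity, given $\Psi_1,\Psi_2\in\mathcal{T}(S)$ I would choose representatives $\psi_1,\psi_2$ each fixing $p$, so that $\psi_1\circ\psi_2$ is again a representative of $\Psi_1\Psi_2$ fixing $p$. Since $\tilde{\psi}_2(\tilde p)=\tilde p$ and $\tilde{\psi}_1(\tilde p)=\tilde p$, the composition $\tilde{\psi}_1\circ\tilde{\psi}_2$ is a lift of $\psi_1\circ\psi_2$ fixing $\tilde p$, and by uniqueness of such lifts it coincides with the distinguished lift used to compute $\lambda(\Psi_1\Psi_2)$ for this choice of representative. Passing to isotopy classes in $\MCG(\tilde S)$ and then applying $\pi$ gives $\rho(\Psi_1\Psi_2)=\rho(\Psi_1)\rho(\Psi_2)$. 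The main obstacle is not the composition step, which is essentially bookkeeping, but rather the pair of facts that (i) the only ambiguity in lifting a representative is post-composition with $\xi$, and (ii) $\xi$ acts as $-\id$ on $\mathcal{H}$; together these guarantee that $\pi$ absorbs all ambiguity and converts $\lambda$ into a genuine homomorphism.
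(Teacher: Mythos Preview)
Your proof is correct and follows essentially the same approach as the paper: lift an isotopy between representatives to see that the ambiguity in $\lambda$ is precisely post-composition with the deck involution $\xi$, and then observe that this ambiguity vanishes in $\PSp(\mathcal{H})$. You are in fact a bit more explicit than the paper, which does not spell out that $\xi$ acts as $-\id$ on $\mathcal{H}$ and treats multiplicativity only implicitly via the line ``Since $\pi$ is a homomorphism it suffices\ldots''; your handling of both points is a welcome clarification rather than a different route.
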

\begin{proof} Since $\pi$ is a homomorphism is is suffices to show
that $\rho(\Psi)$ does not depend on the representative $\psi$ which
has been chosen for $\Psi$.

Let $\Psi \in \mathcal{T}(S)$ and let $\psi_0,\psi_1 \in \Psi$. Let
$\tilde{\psi}_0$ and $\tilde{\psi}_1$ be the lifts of $\psi_0$ and
$\psi_1$ which preserve the point $\tilde{p}$. There is an isotopy
$(\psi_t)_{t \in [0,1]}$ between $\psi_0$ and $\psi_1$. This isotopy
can be lifted to isotopy $(\hat{\psi}_t)_{t \in [0,1]}$ in
$\tilde{S}$ such that $\hat{\psi}_0=\tilde{\psi}_0$. Note that
$\hat{\psi}_1$ is a lift of $\psi_1$ but it does not have to
preserve $\tilde{p}$. If $\hat{\psi}_1(\tilde{p}) = \tilde{p}$ then
$\hat{\psi}_1=\tilde{\psi}_1$ which means that $\tilde{\psi}_0$ and
$\tilde{\psi}_1$ are isotopic so $\pi([\psi_0])=\pi([\psi_1])$. On
the other hand, if $\hat{\psi}_1(\tilde{p}) \ne \tilde{p}$ then
$\hat{\psi}_1 = \xi \circ \tilde{\psi}_1$ when $\xi$ is the
non-identity lift to $\tilde{S}$ of the identity on $S$ and
$\pi([\psi_0])=\pi([\xi
\circ\psi_1])=\pi([\xi])\pi([\psi_1])=\pi([\psi_1])$.
\end{proof}

The goal of the next proposition is to show that the image of $\rho$
is of finite index in $\PSp(2g-2,\Z)$. The proof of this proposition
is by analyzing certain \emph{Dehn twists} so we briefly recall
their definition (for more details see \cite{a primer}, $2.2$ and
$7.1$).

Let $t$ be a simple closed curve of an oriented surface $S$ and let
$N$ be a regular neighborhood of it homoeomorphic to an annulus (the
homoeomorphism is required to preserve orientation), which we
consider parameterized as
$$\{(r,\theta)\mid 1\le r\le 2\ \ \wedge \ 0 \le \theta < 2\pi \}$$
where $t$ is identified with $\{(\frac{3}{2},\theta) \mid 0 \le
\theta < 2\pi)\}$.

A \emph{Dehn twist} $D_t$ along $t$ is the homeomorphism given by
identity outside $N$ and by the map $(r,\theta)\mapsto(r,\theta+2\pi
r)$ on $N$. The isotopic class of this homeomorphism does not depend
on the neighborhood $N$. The action of $D_t$ induced on
$\Ho_1(S,\mathbb{Z})$ is given by $h\mapsto h+(h,\bar{t})_S\bar{t}$.
In particular if $\bar{t}=0$ then $[D_t]\in \mathcal{T}(S)$.


The following Proposition is a consequence of the more general
results in \cite{Loo}. For completeness we give the proof of our
concrete case.

\begin{prop}\label{rho} The image of $\rho$ is a finite index subgroup of
$\PSp(\mathcal{H})$.
\end{prop}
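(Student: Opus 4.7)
The plan is to produce sufficiently many symplectic transvections in $\rho(\mathcal{T}(S))$ to generate a finite-index subgroup of $\PSp(\mathcal{H})=\PSp(2g-2,\Z)$. The natural source is Dehn twists $D_t$ along null-homologous (hence separating) simple closed curves $t\subset S$, which lie in $\mathcal{T}(S)$. Since $\bar{t}=0$, the class $[t]$ lies in the kernel of the character $\phi\colon\pi_{1}(S)\to\Z/2\Z$ defining the double cover $\tilde{S}\to S$, so $t$ lifts to a disjoint pair $t^{*},t^{\diamond}\subset\tilde{S}$, and $D_t$ lifts to the commuting product $D_{t^{*}}\circ D_{t^{\diamond}}$.

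The key computation is as follows. Decompose $\Ho_{1}(\tilde{S},\Q)=\mathcal{H}_{\Q}\oplus V$ into the $(-1)$- and $(+1)$-eigenspaces of the deck involution $\xi$. These subspaces are symplectically orthogonal since $\xi$ preserves the form. Writing $\bar{t}^{*}=u+w$ with $u\in\mathcal{H}_{\Q}$, $w\in V$, so that $\bar{t}^{\diamond}=\xi_{*}\bar{t}^{*}=-u+w$, and using $(\cdot,\cdot)_{\mathcal{H}}=\tfrac{1}{2}(\cdot,\cdot)_{\tilde{S}}$, a short calculation shows that the restriction to $\mathcal{H}$ of the lifted action is the symplectic transvection along $2u=\bar{t}^{*}-\bar{t}^{\diamond}\in\mathcal{H}$. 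Thus $\rho([D_t])\in\PSp(\mathcal{H})$ is (the class of) this transvection, and it is non-trivial precisely when $\phi$ is non-trivial on the fundamental group of at least one component of $S\setminus t$.

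Next I would exhibit an explicit family of such curves---for example, by applying elements of $\MCG(S)$ to a standard separating curve $c_{k}$ so as to mix handles and ensure that the two components of $S\setminus t$ both carry loops with $\phi$-value $1$---so that the resulting vectors $\bar{t}^{*}-\bar{t}^{\diamond}$, together with their $\rho(\mathcal{T}(S))$-conjugates, realize a symplectic basis of $\mathcal{H}$. Since $2g-2\ge 4$, classical results on the structure of $\Sp(2n,\Z)$ for $n\ge 2$ (elementary generation by transvections along a symplectic basis, together with bounded generation and the congruence subgroup property) then imply that the subgroup of $\Sp(\mathcal{H})$ so generated contains a principal congruence subgroup and is therefore of finite index. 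The same conclusion descends to $\PSp(\mathcal{H})$.

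The main obstacle is this third step: the combinatorial/topological bookkeeping required to exhibit sufficiently many curves $t$ whose lift-differences $\bar{t}^{*}-\bar{t}^{\diamond}$ span $\mathcal{H}$ symplectically. This is the heart of the argument, and is the specialization to the present concrete setting of the more general analysis in \cite{Loo}.
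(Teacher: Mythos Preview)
Your outline is the paper's approach: Dehn twists along null-homologous curves lift to products of twists on $\tilde{S}$ and act on $\mathcal{H}$ as fourth powers of transvections (your eigenspace computation is exactly the paper's, phrased abstractly). For the step you flag as the obstacle, the paper exhibits two explicit curves (via figures) whose lifts satisfy $\bar t_1=-\bar t_2=z_{-i}$ and $\bar r_1=-\bar r_2=z_{-i}-z_{-j}$ for $1\le i\ne j\le g-1$, yielding $\bar T_i^4$ and, after rearranging, $\bar T_{i,j}^4$. The remaining sign combinations $\bar T_{\pm i}^4$, $\bar T_{\pm i,\pm j}^4$ are obtained not by finding further curves but by conjugating within $\mathcal{T}(S)$ by the mapping classes $\Delta_i=[D_{b_i}D_{a_i}D_{b_i}]\in\MCG(S)$, using normality of $\mathcal{T}(S)$ in $\MCG(S)$; this shortcut spares you the combinatorial bookkeeping you anticipated. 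One correction to your final step: transvections along the vectors of a symplectic basis alone generate only a block-diagonal $\prod_{i}\SL(2,\Z)$, which is of infinite index in $\Sp(2g-2,\Z)$, so the second ($T_{i,j}$-type) family of curves is genuinely necessary and your ``conjugates'' remark does not rescue this. With the full elementary set $E$ in hand, the paper invokes Tits's theorem that $\{T^k:T\in E\}$ generates a finite-index subgroup of $\Sp(2g-2,\Z)$ for every $k\ge 1$, which is a cleaner citation here than the congruence subgroup property.
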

\begin{proof}
For $i,j\in \{\pm 1,\ldots,\pm (g-1)\}$ with $i\ne \pm j$ we define:
$$
\begin{array}{ccc}
T_i :\mathcal{H}\rightarrow\mathcal{H} & &T_{i,j}
:\mathcal{H}\rightarrow\mathcal{H} \\
h\mapsto h+(h,{z}_{-i})_{\mathcal{H}}{z}_{-i}& & h\mapsto
h+(h,{z}_{-i})_{\mathcal{H}}{z}_{-j}+(h,{z}_{-j})_{\mathcal{H}}{z}_{-i}
\end{array} $$
The  $T_i$'s and $T_{i,j}$'s  are called \emph{elementary symplectic
transformations} of $\mathcal{H} $ with respect to the basis
$z_1,z_{-1},\ldots,z_{g-1},z_{1-g}$ and they generate
$\Sp(\mathcal{H})$ \cite{HO}. Let $E$ be the set of elementary
symplectic transformations  with respect to the above basis. Tits
proved that for every $k\in \mathbb{N}^+$ the set $E^k:=\{T^k\mid
T\in E\}$ generates a finite index subgroup of
$\Sp(2g-2,\mathbb{Z})$ (\cite{Tit}). Let $\bar{T}_i\in
\PSp(2g-2,\Z)$ and $\bar{T}_{i.j}\in\PSp(2g-2,\Z)$ be the images of
$T_i$ and $T_{i,j}$. In order to prove the Proposition it suffices
to show that $\textrm{Im}\rho$ contains $\bar{E}^4$ where
$\bar{E}^4:=\{\bar{T} \mid T\in E^4\}$.

Fix $1 \le i \le g-1$. We start by showing that
$\bar{T}_i^4\in\textrm{Im}\rho$. Let $t$ be the simple closed  path
of $S$ drawn in Figure 4. Then $[D_t]\in \mathcal{T}(S)$ since
$\bar{t}=0$. The curve  $t$ has two disjoint lifts ${t}_1$,${t_2}$
to $\tilde{S}$ as shown in figure 5. Thus, $\rho
({[D_t]})=\pi([D_{t_1}])\circ \pi([D_{t_2}])$ (since the two lifts
are disjoint, the isotopic classes of their Dehn twists commute, so
the order of the multiplication is not important). Since
$(\cdot,\cdot)_\mathcal{H}=\frac{1}{2}(\cdot,\cdot)_{\tilde{s}}$ and
$\bar{t}_1=-\bar{t}_{2}={z}_{-i}$ then
$\pi[D_{t_1}]=\pi[D_{t_2}]=\bar{T}_i^2$ and
$\rho([D_t])=\bar{T}^4_{i}$.
\begin{figure}
\begin{center}
\insertimage{.5}{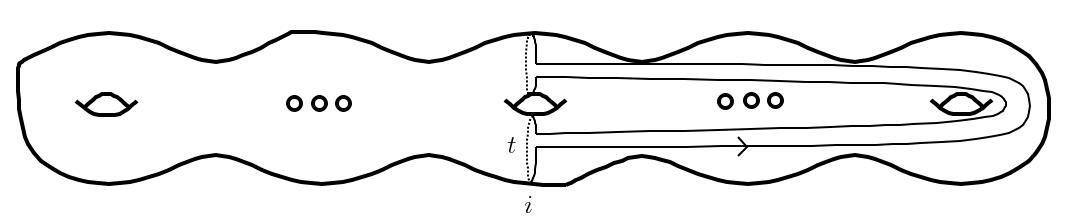}\caption{} \end{center}
\end{figure}
\begin{figure}
\begin{center}
\insertimage{.6}{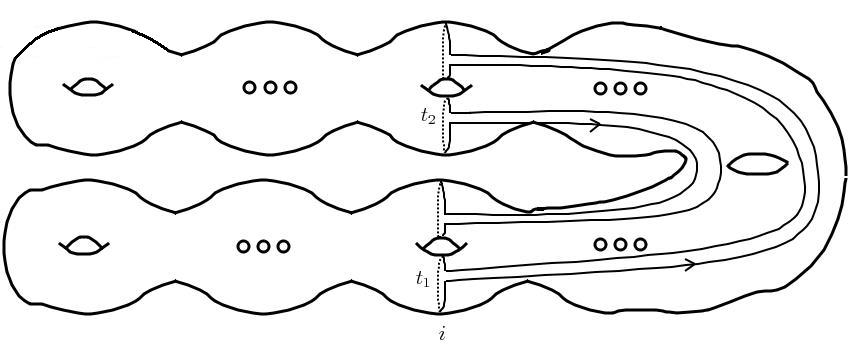}\caption{}
\end{center}
\end{figure}
\begin{figure}
\begin{center}
\insertimage{.55}{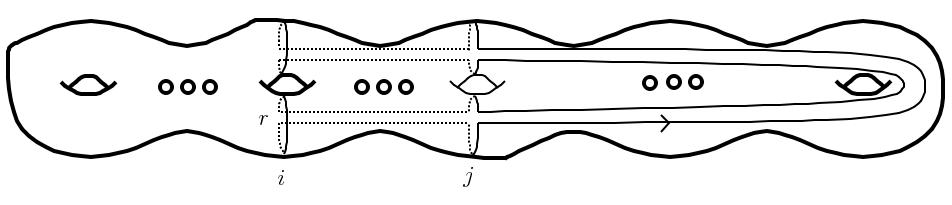}\caption{}
\end{center}
\end{figure}
\begin{figure}
\begin{center}
\insertimage{.6}{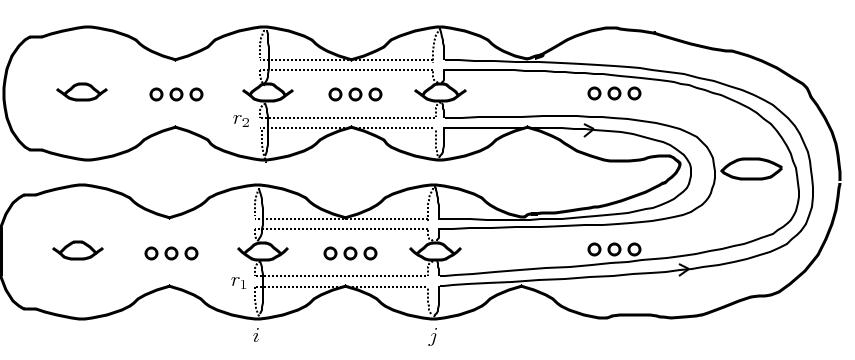}\caption{}
\end{center}
\end{figure}

Next, we show that $\bar{T}^4_{i,j}\in\textrm{Im}\rho$ where $1 \le
i\ne j \le g-1$. Let $r$ be the closed simple path of $S$ drawn in
Figure 6. Then $[D_r]\in \mathcal{T}(S)$ since $\bar{r}=0$. The
curve $r$ has two disjoint lifts ${r}_1$,${r_2}$ to $\tilde{S}$ as
shown in figure 7. Thus,
$\rho([D_r])=\pi([D_{r_1}])\circ\pi([D_{r_2}])$ and
$\bar{r}_1=-\bar{r}_2=\bar{z}_{-j}-\bar{z}_{-i}$. Therefore,
$\rho([D_r])=\bar{T}^4_{i}\circ \bar{T}^4_{j}\circ
\bar{T}^{-4}_{i,j}$. Thus, also $\bar{T}^{4}_{i,j}\in \image \rho$
since $\bar{T}^4_{i},\bar{T}^4_{j}\in \image \rho$.

This handle the case of positive $i$ and $j$. For the general case
we can change coordinates and argue as follows:

Fix $\Psi_1,\Psi_2\in \mathcal{T}(S)$ such that
$\rho(\Psi_1)=\bar{T}_i^4$ and $\rho({\Psi_2})=\bar{T}_{i,j}^4$ for
$1 \le i \ne j \le g-1$. Let $\Delta_i\in \MCG(S)$ be the isotopic
class of the composition $D_{b_i}\circ D_{a_i}\circ D_{b_i}$ of Dehn
twists.  By the formula given in the previous paragraph for the
action of a Dehn twist on the first homology we see that:
$$\label{finite index lemma delta}\begin{array}{cccc} \Delta_i
(\bar{a}_i) = \bar{b}_i, & \Delta_i(\bar{b}_i) = -\bar{a}_i, &
\Delta_i(\bar{a}_j) = \bar{a}_j ,&
 \Delta_i(\bar{b}_j) = \bar{b}_j.
\end{array} $$ Hence:
$$\begin{array}{ccc}
\rho(\Delta_i\Psi_1\Delta_i^{-1})=\bar{T}_{-i}^4 & \ \ &
\rho(\Delta_i^{-1}\Psi_2\Delta_i)=\bar{T}_{-i,j}^4 \\
\rho(\Delta_j^{-1}\Psi_2\Delta_j)=\bar{T}_{i,-j}^4 & \ \ &
\rho(\Delta_i^{-1}\Delta_j^{-1}\Psi_2\Delta_j\Delta_i)=\bar{T}_{-i,-j}^4
\end{array}. $$
Thus, if $\bar{T}_i^4,\bar{T}^4_{i,j}\in\textrm{Im}\rho$ then so are
$\bar{T}_{\pm i}^4,\bar{T}^4_{\pm i,\pm j}\in\textrm{Im}\rho$.
\end{proof}

An element of $\PSp(2g-2,\Z)$ has two lifts to an elements of
$\Sp(2g-2,\Z)$. However, the characteristic polynomials of these
lifts are all reducible or all irreducible. Thus, we can talk about
the reducibility of an element of $\PSp(2g-2,\Z)$. We are ready to
prove the main proposition of this section.

\begin{prop}\label{main prop} There is a finite number of homomorphisms $$\rho_1,\ldots,\rho_n:\mathcal{T}(S)\rightarrow
\PSp(2g-2,\mathbb{Z})$$ with the following properties:
\begin{itemize}
\item[1.] The homomorphism $\rho_i$ is onto a finite index subgroup of
$\PSp({2g-2,\mathbb{Z}})$ for every $1\le i \le n$.
\item[2.] If $\Psi\in\mathcal{T}(S)$ is not pseudo-Anosov then there
exist $1 \le i \le n$ such that the characteristic polynomial of
$\rho_i(\Psi)$ is reducible.
\end{itemize}
\end{prop}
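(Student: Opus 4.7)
The plan is to construct one homomorphism $\rho_\chi$ for each non-trivial character $\chi \in \Hom(\Ho_1(S,\mathbb{F}_2), \mathbb{F}_2)$, giving $n = 2^{2g}-1$ homomorphisms in total. For a fixed $\chi$, the corresponding unramified double cover $\varphi_\chi: \tilde{S}_\chi \to S$ (the one along which a loop $\gamma$ lifts iff $\chi(\bar{\gamma}) = 0$) plays the role of the specific cover built in the excerpt by cutting along $b_g$ (that cover corresponds to the character dual to $\bar{b}_g$). The construction of $\rho_\chi: \mathcal{T}(S) \to \PSp(\mathcal{H}_\chi) \cong \PSp(2g-2,\Z)$ — lifting a representative $\psi$ of $\Psi$ to the lift $\tilde{\psi}$ fixing a chosen basepoint, then projectivizing — goes through unchanged for every $\chi$, and the proof that it is a well-defined homomorphism is identical.

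For Property 1, I would repeat the argument of Proposition \ref{rho} verbatim: the curves $t$ and $r$ used there to realize $\bar{T}_i^4$ and $\bar{T}_{i,j}^4$ as images under $\rho$ of Dehn twists in $\mathcal{T}(S)$ were chosen because (i) they are null-homologous in $S$, and (ii) they lift to two disjoint copies in the specific double cover. Both properties are invariant under homeomorphisms of $S$, so applying a homeomorphism $\nu$ of $S$ whose induced action on $\Ho_1(S,\mathbb{F}_2)$ carries the distinguished $b_g$-character to $\chi$ transports $t, r$ to curves with the same properties relative to $\tilde{S}_\chi$. The Tits argument then shows $\image\rho_\chi$ contains a finite-index subgroup of $\PSp(\mathcal{H}_\chi)$.

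For Property 2, given a non-pseudo-Anosov $\Psi \in \mathcal{T}(S)$, Lemma \ref{simple criterion} supplies a representative $\psi$ and a simple closed curve $C$ with $\psi(C) = C^{\pm 1}$, where $C = \nu(a_1)$ (non-separating case) or $C = \nu(c_k)$ for some $1 \le k \le g-2$ (separating case). In both cases I would exhibit a character $\chi$ for which $C$ lifts to two disjoint components $C_1 \sqcup C_2$ in $\tilde{S}_\chi$: in the non-separating case any non-trivial $\chi$ with $\chi(\bar{C}) = 0$ works (such $\chi$ exist because $\bar{C} \ne 0$, so the annihilator of $\bar{C}$ contains $2^{2g-1}-1 > 0$ non-trivial characters); in the separating case $C$ bounds a subsurface of genus $k \le g-2$, and any non-trivial $\chi$ vanishing on the homology of that subsurface works (at least $2^{2g-2k}-1 > 0$ choices). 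The lift $\tilde\psi$ of $\psi$ then either fixes each $C_i$ or swaps them, so $\rho_\chi(\Psi)$ sends the element $z := \bar{C}_1 - \bar{C}_2 \in \mathcal{H}_\chi$ to $\pm z$; hence $\langle z\rangle$ is a proper invariant subspace and the characteristic polynomial of $\rho_\chi(\Psi)$ is reducible.

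The main obstacle will be verifying that $z \ne 0$ in $\Ho_1(\tilde{S}_\chi,\Z)$. For the non-separating case one can reduce to the explicit model of the excerpt: pick a homeomorphism $\mu$ of $S$ carrying $C$ to $a_1$; then $\mu_* \chi$ is a character with $(\mu_* \chi)(\bar{a}_1) = 0$, and after further composing with a symplectic symmetry one may bring $\mu_*\chi$ to the distinguished $b_g$-character of the excerpt, under which the image of $z$ becomes the basis vector $z_1 \in \mathcal{H}$ and is manifestly non-zero. The separating case is handled analogously using the fact that $C$ lifts to two null-homologous curves in $\tilde{S}_\chi$ whose difference nevertheless represents a non-trivial class in $\mathcal{H}_\chi$ (detected by intersection with any curve on which $\chi$ is non-trivial).
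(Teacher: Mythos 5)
Your overall architecture (one prym cover per non-trivial mod-$2$ character, $n=2^{2g}-1$, re-use the Dehn twist computation for surjectivity, apply Lemma~\ref{simple criterion} for the non-PA case) is exactly the paper's, and the non-separating case is handled the same way the paper handles it. But there is a genuine error in your separating case.

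You take $\chi$ to vanish on $\Ho_1(S_1,\mathbb{F}_2)$, where $S_1$ is the genus-$k$ side of $C$, and then claim that $z=\bar{C}_1-\bar{C}_2$ is a non-zero element of $\mathcal{H}_\chi$. This is false. With that choice of $\chi$ the restriction of the cover to $S_1$ is trivial, so $\varphi^{-1}(S_1)$ is two disjoint copies $S_1^{(1)}\sqcup S_1^{(2)}$ of $S_1$ with $\partial S_1^{(j)}=C_j$; hence each $C_j$ bounds a subsurface of $\tilde{S}_\chi$ and is null-homologous, giving $\bar{C}_1=\bar{C}_2=0$ and $z=0$. Your parenthetical claim that $z$ is ``detected by intersection with any curve on which $\chi$ is non-trivial'' cannot rescue this: a null-homologous closed curve has algebraic intersection number zero with every closed curve. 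The proposed $1$-dimensional invariant subspace $\langle z\rangle$ therefore does not exist. (Your instinct that it should work is not crazy: if instead one chooses $\chi$ to be non-trivial on the homology of \emph{both} sides of $C$, then $\varphi^{-1}(S_j)$ is connected for $j=1,2$, each $C_j$ is non-separating in $\tilde{S}_\chi$, and one does get $z\ne 0$; but that is not the $\chi$ you chose, and it is not what the paper does.)

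The paper's separating-case argument is different and avoids the issue entirely. With $\chi$ vanishing on $\Ho_1(S_1)$, the correct $\rho_\chi(\Psi)$-invariant subspace is not a line spanned by a difference of lifts of $C$; it is the full rank-$2k$ sublattice of $\mathcal{H}_\chi$ generated by the differences $\bar{d}^{(1)}-\bar{d}^{(2)}$ for $d$ a curve in $S_1$ (in the explicit model, $\langle z_1,z_{-1},\ldots,z_k,z_{-k}\rangle$). Since $\psi$ fixes $C$ and acts trivially on $\Ho_1(S)$, it preserves $S_1$ up to isotopy, the lift $\tilde{\psi}$ permutes $S_1^{(1)},S_1^{(2)}$, and hence $\rho_\chi(\Psi)$ preserves this rank-$2k$ sublattice; as $1\le k\le g-2$, it is proper and non-trivial in $\mathcal{H}_\chi\cong\mathbb{Z}^{2g-2}$, so the characteristic polynomial is reducible. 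You should replace your $\langle z\rangle$ argument by this subspace argument in the separating case.

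One smaller remark: in the non-separating case, your sketch for proving $z\ne 0$ by moving $(C,\chi)$ to $(a_1,\text{the }b_g\text{-character})$ by a homeomorphism is fine in spirit, but you should say why a homeomorphism carrying $C$ to $a_1$ can be further adjusted, within the stabilizer of $a_1$ up to isotopy, to bring the transported character to the $b_g$-character; this uses transitivity of the stabilizer of $\bar{a}_1$ in $\Sp(2g,\mathbb{F}_2)$ on non-zero vectors of the symplectic complement of $\bar{a}_1$. The paper sidesteps this bookkeeping by phrasing everything in terms of a homeomorphism $\nu$ as in Lemma~\ref{simple criterion} and then observing that $\rho_{\tilde{S},\nu}(\Psi)$ and $\rho_{\tilde{S}}(\Psi)$ are conjugate, so reducibility of the characteristic polynomial transfers.
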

\begin{proof} Let $\Psi\in\mathcal{T}(S)$ be a non pseudo-Anosov element.

First assume that there exist  $\psi\in\Psi$ satisfying
$\psi(a_1)=a_1^{\pm 1}$ or $\psi(c_k)=c_k^{\pm 1}$ for some $1\le k
\le g-2$. In the first case $\rho(\Psi)$ preserves $\langle
z_1\rangle$ while in the second case, it preserves $\langle
z_1,z_{-1},\ldots,z_{k},z_{k}\rangle $ ($\psi$ leaves the two
components of $S\setminus c_k$ invariant since it acts trivially on
the homology). Thus, if we choose an isomorphism
$\delta:\PSp(\mathcal{H})\rightarrow \PSp(2g-2,\mathbb{Z})$ then the
characteristic polynomial of $\delta\circ\rho(\Psi)$ is reducible.

Now for a general $\Psi$, Lemma $\ref{simple criterion}$ shows that
there is an orientation preserving homeomorphism $\nu$ of $S$ such
that $(1)$ or $(2)$ of this lemma are satisfied.  We can use the
above construction of the double cover with respect to
$\nu(a_1),\nu(b_1),\ldots,\nu(a_g),\nu(b_g)$ instead of
${a}_1,{b}_1,\ldots ,{a}_g,{b}_g$ to get the desired homomorphism.

This implies that for every $\Psi\in\mathcal{T}(S)$ which is not
pseudo-Anosov there are a double cover $(\tilde{S},\varphi)$ of $S$,
an orientation preserving homeomorphism $\nu$ of $S$ and a
homomorphism $\rho_{\tilde{S},\nu}:\mathcal{T}(S)\rightarrow
\PSp(2g-2,\mathbb{Z})$ such that the characteristic polynomial of
$\rho_{\tilde{S},\nu}(\Psi)$ is reducible. However, the dependence
of $\rho_{\tilde{S},\nu}$ on $\nu$ just follows from the choice of
basis for $\ker (\Ho_1(\tilde{S},\mathbb{Z})\rightarrow
\Ho_1({S},\mathbb{Z}))$.

Hence, fix once and for all a basis $B_{\tilde{S}}$ of $\ker
(\Ho_1(\tilde{S},\mathbb{Z})\rightarrow \Ho_1({S},\mathbb{Z}))$
($B_{\tilde{S}}$ does not depend on $\nu$ or $\Psi$ only on
$\tilde{S}$) and define the homomorphism
$\rho_{\tilde{S}}:\mathcal{T}(S)\rightarrow \PSp(2g-2,\mathbb{Z})$
with respect to this basis. Then, $\rho_{\tilde{S}}(\Psi)$ and
$\rho_{\tilde{S},\nu}(\Psi)$ are conjugate so the characteristic
polynomial of $\rho_{\tilde{S}}(\Psi)$ is also reducible. This
finish the proof since $S$ only  has $2^{2g}-1$ double covers.
\end{proof}

\section{Random walks and large sieve techniques}

We start this section with a formal model for random walks. Let
$\Gamma$ be a group. A finite symmetric subset $\Sigma$ of $\Gamma$
is called \emph{admissible} if it generates $\Gamma$ and satisfies
an odd relation, e.g. if it contains the identity (symmetric means
$\Sigma=\Sigma^{-1}$). Fix an admissible subset $\Sigma$ of $\Gamma$
and endow it with the uniform probability measure. A \emph{walk} $w$
on $\Gamma$ with respect to $\Sigma$ is a function
$w:\mathbb{N}^+\rightarrow \Sigma$. The $k^{\textrm{th}}$-step of
$w$ is $w_k:=w(1)\cdots w(k)$ (in particular, $w_0$ is the
identity). The probability measure on $\Sigma$ induces a product
probability measure on the set of $\Sigma$-walks $\Sigma^{\N^{+}}$.
For a subset $Z$ of $\Gamma$ we denote the probability that the
$k^{\textrm{th}}$-step of a walk belongs to $Z$ by $\prob(w_k \in
Z)$. The set $Z$ is called \emph{exponentially small with respect to
$\Sigma$} if there are constants $c,\alpha>0$ such that $\prob(w_k
\in Z) \le ce^{-\alpha k}$ for every $k \in \N$. A set is
\emph{exponentially small} if it exponentially small with respect to
every admissible subset of $\Gamma$.

The following theorem follows immediately from Theorem 2 of
\cite{LM}:

\begin{thm}\label{sieve theorem} Let $\Gamma$ be a finitely generated
group and let $\mathcal{P}$ the set of all but finitely many primes.
Let $(N_p)_{p \in \mathcal{P}}$ be a series of finite index normal
subgroups of $\Gamma$. Assume that there is a constant  $d \in \N^+$
such that:
\begin{itemize}
\item[1.] $\Gamma$ has property-$\tau$ with respect to the series of
normal subgroup $(N_p \cap N_q)_{p,q\in \mathcal{P}}$.
\item[2.] $|\Gamma_p| \le p^d$ for every $p \in \mathcal{P}$ where $\Gamma_p:=\Gamma/N_p$.
\item[3.] The natural map $\Gamma_{p,q}\rightarrow \Gamma_p \times
\Gamma_q$ is an isomorphism   for every distinct $p,q\in
\mathcal{P}$ where $\Gamma_{p,q}:=\Gamma /(N_p \cap N_q)$.
\end{itemize} Then a subset $Z\subseteq \Gamma$ is exponentially small if there is $c>0$ such that:
\begin{itemize}
\item[4.] $\frac{|Z_p|}{|\Gamma_p|}\le 1-c$ for every $p \in \mathcal{P}$ where $Z_p:=ZN_p/N_p$.
\end{itemize}
\end{thm}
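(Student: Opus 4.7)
The plan is to invoke Theorem 2 of \cite{LM}, which is an abstract large sieve inequality for random walks on finitely generated groups; the task reduces to matching conditions 1--4 above to its hypotheses. To explain why these four conditions are precisely the ones needed, I sketch the underlying sieve strategy.

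First, for each prime $p \in \mathcal{P}$, the random walk on $\Gamma$ projects to a random walk on the finite group $\Gamma_p$. Property-$\tau$ for the family $(N_p)$ (which follows from condition 1, since $N_p \supseteq N_p \cap N_q$) furnishes a uniform spectral gap, so the distribution of $w_k \cdot N_p$ on $\Gamma_p$ converges to uniform at an exponential rate $e^{-\beta k}$ that does not depend on $p$. Combined with condition 4, this gives that each ``local'' probability $\prob(w_k \in Z N_p)$ is bounded by $1 - c + O(e^{-\beta k})$. A single prime $p$ is not enough to conclude, because this bound does not decay in $k$.

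To upgrade this to an exponentially decaying bound, one combines local information at many primes through a second-moment (variance) argument in the spirit of the classical large sieve. The off-diagonal covariances to be controlled are indexed by pairs of distinct primes $p,q$ and involve characters of $\Gamma_{p,q}$; they are precisely controlled by the quasi-independence condition 3, which identifies $\Gamma_{p,q}$ with $\Gamma_p \times \Gamma_q$ so that non-trivial characters split as tensor products of non-trivial characters of the two factors. Property-$\tau$ on the \emph{joint} reductions $(N_p \cap N_q)$, which is the full force of condition 1, supplies the uniform spectral gap needed to bound these covariance terms. Condition 2, the polynomial bound $|\Gamma_p| \le p^d$, then controls the number of characters to be summed and hence the total error.

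Putting these ingredients together yields a sieve inequality of roughly the shape
\[
\prob(w_k \in Z) \;\le\; \frac{1}{c\,|\{p \in \mathcal{P} : p \le N\}|} \;+\; C(N)\, e^{-\beta k},
\]
where $C(N)$ grows only polynomially in $N$ (with exponent controlled by $d$). Choosing $N$ to grow like a suitable small power of $e^{k}$ balances the two terms and delivers the required bound $\prob(w_k \in Z) \le c' e^{-\alpha k}$. The hardest point, which is what \cite{LM} handles in detail, is to make every constant in this argument effective and to verify that the final exponent $\alpha$ depends only on $d$, $c$, and the $\tau$-constant, uniformly in $k$; once this is done the theorem follows by direct citation.
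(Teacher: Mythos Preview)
Your approach is exactly the paper's: the theorem is stated as an immediate consequence of Theorem~2 of \cite{LM}, with no further argument given. Your additional sketch of the large-sieve mechanism (spectral gap from property-$\tau$, quasi-independence from condition~3, and polynomial growth from condition~2 feeding into a second-moment bound balanced against the prime count) is accurate and goes beyond what the paper itself provides.
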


See Section 2 of \cite{LM} or \cite{Lu} for the definition of
property-$\tau$. We are ready to prove our main theorem:

\begin{thm}\label{main thm} Let $S$ be an orientable connected compact surface of genus $g \ge
3$ and denote its Torelli subgroup by $\mathcal{T}(S)$. The set
$Z\subseteq \mathcal{T}(S)$ consisting of non-pseudo-Anosov elements
is exponentially small.
\end{thm}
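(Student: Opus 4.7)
The plan is to combine Proposition \ref{main prop} with the sieve Theorem \ref{sieve theorem}. By Proposition \ref{main prop}, there are finitely many homomorphisms $\rho_1,\ldots,\rho_n : \mathcal{T}(S) \to \PSp(2g-2, \Z)$, each onto a finite-index subgroup, such that every non-pseudo-Anosov element $\Psi \in \mathcal{T}(S)$ lies in some
$$Z_i := \{\Psi \in \mathcal{T}(S) : \text{the characteristic polynomial of } \rho_i(\Psi) \text{ is reducible over } \Q\}.$$
Thus $Z \subseteq \bigcup_{i=1}^n Z_i$, and since a finite union of exponentially small sets is exponentially small (by a union bound on random walk probabilities), it suffices to show each $Z_i$ is exponentially small.

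For each fixed $i$, the plan is to apply Theorem \ref{sieve theorem} with $\Gamma = \mathcal{T}(S)$. Let $\Gamma_i := \rho_i(\mathcal{T}(S))$, a finite-index subgroup of $\PSp(2g-2, \Z)$. Let $\mathcal{P}$ be the set of all sufficiently large primes, and set
$$N_p := \rho_i^{-1}\bigl(\Gamma_i \cap \ker\bigl(\PSp(2g-2, \Z) \to \PSp(2g-2, \mathbb{Z}/p\mathbb{Z})\bigr)\bigr).$$
For $p$ large, strong approximation guarantees that $\Gamma_i$ surjects onto $\PSp(2g-2, \mathbb{Z}/p\mathbb{Z})$, whence $\mathcal{T}(S)/N_p \cong \PSp(2g-2, \mathbb{Z}/p\mathbb{Z})$. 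This gives condition 2 with $d = (2g-2)^2$, and the Chinese Remainder Theorem together with strong approximation for $\PSp$ yields condition 3 for distinct $p,q\in\mathcal{P}$. For condition 1 (property-$\tau$) I would use that $g \geq 3$ implies $\Sp(2g-2, \Z) = \Sp(2(g-1), \Z)$ has Kazhdan's property (T) by Kazhdan's theorem, so the finite-index subgroup $\Gamma_i$ also has property (T), and this pulls back through the surjection $\rho_i$ to give property-$\tau$ of $\mathcal{T}(S)$ with respect to $(N_p \cap N_q)_{p,q \in \mathcal{P}}$. Condition 4 is precisely Chavdarov's theorem applied to $\PSp(2g-2, \mathbb{F}_p)$: there is a uniform $c > 0$ with $|Z_{i,p}|/|\mathcal{T}(S)/N_p| \leq 1-c$ for all $p \in \mathcal{P}$.

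The substantive geometric content is all packaged in Proposition \ref{main prop}; after that, the proof parallels the $\MCG(S)$ case of Rivin--Kowalski, with property (T) in rank $g-1$ replacing rank $g$, and with the finite family of prym-type representations $\rho_i$ replacing the single symplectic representation $\pi$. The only step requiring mild extra care is checking that the finite-index issue (i.e.\ $\Gamma_i$ need not equal all of $\PSp(2g-2,\Z)$) does not disturb the sieve hypotheses, which it does not once one restricts to primes large enough for strong approximation to force surjectivity of $\Gamma_i$ onto each mod-$p$ quotient. I do not expect any genuine obstacle beyond this bookkeeping.
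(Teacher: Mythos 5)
Your proposal is correct and follows essentially the same route as the paper: reduce via Proposition \ref{main prop} and a union bound to showing each $Z_i$ is exponentially small, then apply Theorem \ref{sieve theorem} with $N_p=\ker(\pi_p\circ\rho_i)$, verifying conditions 1--4 exactly as you indicate (property~$\tau$ via property~(T) of $\PSp(2g-2,\Z)$ restricted to the finite-index image, surjectivity mod $p$ for large $p$, the crude bound $d=(2g-2)^2$, independence of distinct prime quotients, and Chavdarov's theorem). The only cosmetic difference is that the paper obtains surjectivity mod $p$ and condition~3 from simplicity and pairwise non-isomorphism of $\PSp(2g-2,\Z/p\Z)$ rather than citing strong approximation, but these are interchangeable here.
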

\begin{proof} Let $\rho_1,\ldots,\rho_n:\mathcal{T}(S)\rightarrow
\PSp(2g-2,\mathbb{Z})$ be the homomorphisms of Proposition \ref{main
prop}. For every $1 \le i \le n$, define $Z_i:=\rho_i^{-1}(R)$ where
$R \subseteq \PSp(2g-2,\Z)$ consisting of the elements with
reducible characteristic polynomial. We have $Z \subseteq \cup_{1
\le  i \le n}Z_i$ so it is enough to prove that for such an $i$ the
set $Z_i$ is exponentially small. Fix $1 \le i \le n$.

For every prime $p$ define $\psi_p:=\pi_p \circ \rho_i$ and
$N_p:=\ker \psi_p$ where $\pi_p$ is the modulo-$p$ map
$\PSp(2g-2,\Z) \rightarrow \PSp(2g-2,\Z/p\Z)$. The image of $\rho_i$
in $P\Sp(2g-2,\Z)$ is of finite index. This and the fact that
$\PSp(2g-2,\Z/p\Z)$ is simple for $p \ge 3$ implies that for large
enough prime $p$ the image of $\psi_p$ is $\PSp(2g-2,\Z/p\Z)$. Let
$\mathcal{P}$ be the set of large enough primes. We have to check
that the four conditions of Theorem \ref{sieve theorem} are
satisfied.

Condition $1$ follows from the fact that $\PSp(2g-2,\Z)$ has
property-$\tau$ with respect to the family of congruence subgroups
and the fact that property-$\tau$ is inherited by finite index
subgroups (see for example \cite{Lu}). Condition $2$ is readily true
for $d=(2g-2)^2$. Condition $3$ follows from the fact that for
distinct primes $p,q \ge 3$ the groups $\PSp(2g-2,\Z/p\Z)$ and
$\PSp(2g-2,\Z/q\Z)$ are simple and non-isomorphic. Finally condition
$4$ was proved by Chavdarov \cite{Ch} (see also \cite{Ri}).
\end{proof}

{\flushleft{\bf Remark.} The method of proof actually applies to
many other subgroups of $\MCG(S)$ besides the Torelli subgroup. In
fact, by combining strong approximation \cite{We} and the result of
Salehi-Golsefidy-Varju about property-$\tau$ \cite{SGV}, the proof
applies to all subgroups $\Gamma$ of $\mathcal{T}(S)$ such that for
every $1 \le i \le n$ the group $\rho_i(\Gamma)$ is Zariski-dense
subgroup of $\PSp(2g-2,\mathbb{Z})$ where the notations are as in
Proposition \ref{main prop}. In particular, the result applies to
all finite index subgroup of $\mathcal{T}(S)$.}

Another interesting class of subgroups  are the Johnson subgroups.
For every $k \in \N^+$ let $J(k)$ be the kernel of the action of
$\MCG(S)$ on $\pi_1(S)/\gamma_i(\pi_1(S))$ where $\pi_1(S)$ is the
fundamental group of $S$ and $\gamma_i(\pi_1(S))$ is the
$i^{\text{th}}$-subgroup in the lower central series of $\pi_1(S)$.
The Torelli group is just $J(2)$ and for every $k \ge 2$ the group
$J(2)/J(k)$ is nilpotent. The group $\PSp(2g-2,\Z)$ has property-$T$
for $g \ge 3$ so if $H$ is a finite index subgroup of
$\PSp(2g-2,\Z)$ and $L \lhd H$ is co-nilpotent then $L$ is of finite
index in $H$. Thus, for every $k \ge 2$ and every $1 \le i \le n$
the group $\rho_i(J(k))$ is a finite index subgroup of
$\PSp(2g-2,\mathbb{Z})$ and in particular Zariski-dense.

For $k \ge 3$ the group $J(k)$ is unlikely to be finitely generated.
However the previous discussion shows that there is a finite set
$R(k) \subseteq J(k)$ such that if $L(k)$ is a finitely generated
subgroup of $J(k)$ which contains $R(k)$ then for every $1 \le i \le
n$ the group $\rho_i(L(k))$ is a Zariski-dense subgroup of
$\PSp(2g-2,\mathbb{Z})$. Thus, the set of non-pseudo-Anosov elements
of $L(k)$ is exponentially small.

This shows that in some sense Theorem \ref{main thm} is true for
"sufficiently large" finitely generated subgroups of
$\mathcal{T}(S)$, e.g. subgroups which contains $R(k)$.  The use
here of the term "sufficiently large" should be compared to the one
in \cite{Ma} where the term "sufficiently large" subgroup refer to a
subgroup which contains a pair of pseudo-Anosov elements with
distinct fixed points in the space of projective measured
laminations.


\end{document}